\documentclass[a4paper,notitlepage,twoside,leqno,12pt]{amsart}

\usepackage{bbm,pifont,latexsym}
\usepackage{anysize}
\marginsize{2.8cm}{2.8cm}{2.8cm}{2.8cm}

\usepackage{dcolumn,indentfirst}
\usepackage[colorlinks=false,linkcolor=black,citecolor=black,urlcolor=black,bookmarksopen=true,linktocpage=true,pdfstartview=FitH]{hyperref}
\usepackage{amsmath,amssymb,amscd,amsthm,amsfonts,mathrsfs}
\usepackage{color,graphicx,xcolor,graphics,subfigure,extarrows,caption2}
\usepackage{titletoc}

\newtheorem{thm}{Theorem}[section]
\newtheorem*{main lemma}{Main Lemma}

\newtheorem{lem}[thm]{Lemma}

\theoremstyle{definition}

\usepackage{enumerate,enumitem}

\makeatletter\@addtoreset{equation}{section}\makeatother

\begin{document}

\author[Y. Zhang \& G.  Zhang  ]{Yanhua Zhang \& Gaofei Zhang }
\address{
School of Mathematical Sciences, Qufu Normal University, Qufu 273165, P. R. China}

\email{g\rule[-2pt]{0.2cm}{0.5pt}zhangyh0714@163.com}
\address{Department of Mathematics, Nanjing  University, Nanjing 210093, P. R. China}
\email{zhanggf@hotmail.com}

\title[Constructing  entire functions by quasiconformal surgery]{Constructing   entire functions with non-locally connected Julia set by quasiconformal surgery }

\begin{abstract}
We give an alternative way to construct an  entire function with quasiconformal surgery so that all its Fatou components are quasi-circles but the Julia set is non-locally connected.
\end{abstract}

\subjclass[2010]{Primary 37F45; Secondary 37F20, 37F10}

\keywords{ entire functions, non-locally connected Julia sets }

\date{\today}



\maketitle


\section{Statements of the main results}

In a recent manuscript \cite{BFR} the authors there construct an interesting example of   entire function for which  all the Fatou components are quasi-circles but the Julia set is non-locally connected. The idea used in their construction follows Eremenco and Sodin \cite{ES} and is called   Maclane-Vinberg method. The purpose of this article is to provide an alternative way to construct such functions by quasiconformal surgery. Our   construction is very much inspired by  \cite{Bis}. Compared with the method used in \cite{BFR}, which is purely function theoretic, ours has more geometric feature.

The starting point of our construction is a quasi-regular map $F$ which is described as follows. The building blocks of $F$ are the unit disk $\Bbb D$  and  a sequence of squares $\{P_n\}_{n\ge 1}$ with diameters being even integers.
We assemble these objects from the right to the left so that $\Bbb D$ is the rightmost one.  See Figure 1 for an illustration.     Recall that a map $f: \Bbb C \to \Bbb C$ is called $K$-quasi-regular if $f =  g \circ \phi$ for some entire function $g$ and $K$-quasiconformal map $\phi$ of the plane.

\begin{figure}[!htpb]
  \setlength{\unitlength}{1mm}
  \centering
  \includegraphics[width=100mm]{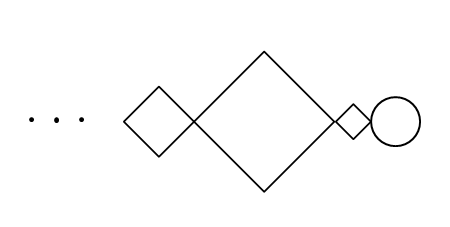}
  \caption{Constructing the quasi-regular map $F$}
  \label{Figure-1}
\end{figure}
\begin{main lemma}
There is a $K > 1$
 such that  for any $\{P_n\}_{n\ge 1}$
there is a $K$-quasi-regular branched covering $F: \Bbb C \to \Bbb C$
satisfying $F^{-1}(\Bbb D)  = \Bbb D \cup \bigcup_{n\ge 1} P_n$, and moreover,
\begin{itemize}

 \item $F: \Bbb D \to \Bbb D$ is the square map, and  $F$ is symmetric about the real axis, that is, $F(\bar{z}) = \overline{F(z)}$ for all $z \in \Bbb C$,
\item $F: P_n  \to \Bbb D$ is  a branched covering of degree $d_n$ with $d_n$ being the diameter of $P_n$, and moreover, each  $P_n$ contains  exactly one critical point  in its interior which is the center and is mapped to the origin, \item    the set of all the other critical points  consists of the  vertices of the squares   which belong to the negative real axis, which are all mapped to the point $1$,    \item    $F$ has no asymptotic values and has exactly two critical values $0$ and $1$,
    \item For any compact subset $X \subset \Bbb C$, there is a $B > 0$ depending only on $X$   such that $|F(z)| < B$ for all $z \in X$.
\end{itemize}

\end{main lemma}

As a consequence we have
\begin{thm}
There is an   entire function $f$ with no asymptotic values and with exactly two critical values $0$ and $1$   such that $f(0) = 0$ and $f(1) = 1$, and moreover,
\begin{itemize}
\item  $f$ has a critical point at $0$,
and   all the other critical points of $f$ are negative reals and are
mapped  either  to $0$  or to    $1$,
\item all the Fatou components of $f$ are quasi-disks and are eventually mapped to the super-attracting basin at the origin,
\item the Julia set of $f$ is non-locally connected.
\end{itemize}
\end{thm}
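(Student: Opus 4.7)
The approach is quasiconformal surgery applied to the quasiregular map $F$ furnished by the Main Lemma. First observe that $F$ is holomorphic on the entire preimage $F^{-1}(\mathbb{D})=\mathbb{D}\cup\bigcup_{n\ge 1}P_n$---the square map on $\mathbb{D}$ and a holomorphic branched cover of degree $d_n$ on each $P_n$---so the complex dilatation $\mu_F$ is supported in $\mathbb{C}\setminus F^{-1}(\mathbb{D})$ with $\|\mu_F\|_\infty\le k<1$ where $K=(1+k)/(1-k)$. I would construct an $F$-invariant Beltrami coefficient $\mu$ on $\mathbb{C}$ by setting $\mu=0$ on $\mathbb{D}$ and spreading it by iterated pull-back along $F$; the usual obstruction to bounding the pulled-back dilatation is neutralised here because the building blocks are assembled (cf.\ \cite{Bis}) so that the forward orbit of a.e.\ point meets the support of $\mu_F$ at most once before entering the holomorphic region, giving $\|\mu\|_\infty\le k$. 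The Measurable Riemann Mapping Theorem then supplies a $K$-quasiconformal homeomorphism $\phi\colon\mathbb{C}\to\mathbb{C}$ with $\bar\partial\phi/\partial\phi=\mu$, normalised so that $\phi(0)=0$, $\phi(1)=1$, $\phi(\infty)=\infty$; the symmetry $F(\bar z)=\overline{F(z)}$ together with a symmetric choice of $\mu$ lets me further arrange $\phi(\bar z)=\overline{\phi(z)}$. The invariance $F^*\mu=\mu$ then forces $f:=\phi\circ F\circ\phi^{-1}$ to be entire.

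Properties of $f$ transport from $F$ via $\phi$. The critical points of $f$ are the $\phi$-images of those of $F$---the origin, the (negative-real) centres of the $P_n$'s, and the negative-real vertex critical points---staying on the real axis by the symmetry of $\phi$. The critical values are $\phi(0)=0$ (with $f(0)=0$) and $\phi(1)=1$ (with $f(1)=1$, since $F(1)=1$ under the square map). Absence of finite asymptotic values transports from the corresponding property of $F$ (guaranteed by the last bullet of the Main Lemma) using $\phi(\infty)=\infty$. For the Fatou set, every critical orbit lies in $\{0,1\}$; $0$ is super-attracting fixed, and $1$ is a repelling fixed point (multiplier $2$, inherited from the square map), so Fatou--Shishikura together with absence of wandering and Baker domains (excluded by the two-singular-value condition) forces every Fatou component to be an iterated $f$-preimage of the super-attracting basin $\phi(\mathbb{D})$. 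Since $\phi(\mathbb{D})$ is a quasi-disk and every such preimage is a proper holomorphic pullback of $\phi(\mathbb{D})$ with tamely branched dynamics, each Fatou component is itself a quasi-disk.

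The substantive claim, and the main obstacle, is non-local connectivity of $J(f)$. Here the freedom in choosing the diameters $\{d_n\}$ of the building-block squares is decisive: I would let $d_n$ grow sufficiently rapidly (e.g.\ $d_n=2^n$), so that the squares $P_n$, and hence their $\phi$-images, march off to $-\infty$ accumulating at $\infty\in\widehat{\mathbb{C}}$ in a ``striped'' fashion, with deep pre-image components of the basin wedged between consecutive $\phi(P_n)$'s. The plan is then to exhibit a point $\zeta\in J(f)$ (arising as the endpoint of an infinite nested chain of pre-image components) together with two sequences $\{z_n\},\{w_n\}\subset J(f)$ converging to $\zeta$ that lie in distinct components of $J(f)\cap U$ for every sufficiently small neighbourhood $U$ of $\zeta$; the obstruction to joining them inside $J(f)$ is furnished by the Fatou components $\phi(P_n)$ and their iterated pre-images, which pinch off every would-be arc. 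Because $\phi$ is globally quasisymmetric, the whole separation argument can be carried out in the explicit Euclidean $F$-picture---where the ladder of squares is under direct control---and then transported to $f$. The hard part is making this quantitative: the quasisymmetric distortion of $\phi$ forces one to deduce non-local connectivity only from scale-invariant features of the configuration, so the specific packing of the $P_n$'s (even integer diameters, glued along matching edges, accumulating at $-\infty$) must be exploited decisively to produce a genuine pinching that prevents local connectivity at $\zeta$.
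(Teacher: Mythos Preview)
Your construction of $f$ contains a genuine gap at the surgery step. The Main Lemma does \emph{not} assert that $F$ is holomorphic on the squares $P_n$ (in the paper's explicit construction $F|_{P_n}$ is only quasiregular), and, more seriously, the claim that ``the forward orbit of a.e.\ point meets the support of $\mu_F$ at most once before entering the holomorphic region'' is false: $F$ maps each half-strip onto one of the slit half-planes $H_\pm$, which again contain half-strips, so orbits may remain in the non-conformal region for arbitrarily many iterates. Consequently the iterated pull-back of the zero Beltrami coefficient does \emph{not} yield an $F$-invariant $\mu$ with $\|\mu\|_\infty<1$, and $\phi\circ F\circ\phi^{-1}$ has no reason to be entire. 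The paper circumvents this by a longer route: first straighten $F$ once (via $\psi$) to make $F\circ\psi^{-1}$ entire, then post-compose with a Riemann map $\eta$ so that $G=F\circ\psi^{-1}\circ\eta$ sends $\mathbb{T}$ to $\mathbb{T}$; a result of de~Melo--van~Strien provides a uniformly quasisymmetric conjugacy of $G|_{\mathbb{T}}$ to $z\mapsto z^2$, which (Lemma~2.1) is extended to a quasiconformal $H:\mathbb{D}\to\mathbb{D}$, and one replaces $G$ on $\mathbb{D}$ by $H^{-1}\circ z^2\circ H$. The resulting $\widehat{G}$ is holomorphic \emph{outside} $\mathbb{D}$ and has $\mathbb{D}$ forward invariant, so now the pull-back genuinely produces a bounded invariant coefficient and $f=\Phi\circ\widehat{G}\circ\Phi^{-1}$ is entire with $\Omega_0=\Phi(\mathbb{D})$ the immediate basin.

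Your argument for non-local connectivity is also too loose to succeed as stated. A fixed growth rate such as $d_n=2^n$ is not shown to work: the straightening maps $\psi,\eta,\Phi$ depend on the \emph{entire} sequence $\{P_n\}$, so one cannot simply read off a pinching picture in the $F$-plane and transport it through an unknown (merely uniformly) quasiconformal map. The paper instead invokes Whyburn's criterion, for which it suffices to exhibit infinitely many Fatou components of spherical diameter at least some fixed $\epsilon>0$. This is arranged by choosing the diameters $d_n$ \emph{inductively}: a uniform contraction lemma for $G$ (Lemma~2.3) together with the uniform quasiconformality and normalisation of $\psi,\eta,\Phi$ guarantees that, once $P_1,\dots,P_k$ are fixed, one can choose $n_{k+1}$ and then $P_{k+1}$ large enough so that a suitable $f^{-n_{k+1}}$-preimage of $\Omega_{k+1}$ crosses the annulus $\{R<|z|<R+1\}$, \emph{regardless} of the later $P_{k+2},P_{k+3},\dots$. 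That inductive bootstrap---not a one-shot growth condition or a pointwise separation argument at some $\zeta$---is what actually delivers the non-local-connectivity conclusion.
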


Just after the submission of the paper,  L. Rempe sent us the following comments on this work.     The entire function  constructed here is  subhyperbolic,   has one super-attracting cycle and is unbounded on the real axis, while the   one
 constructed in \cite{BFR} is hyperbolic,   has two super-attracting cycles and is bounded on the real axis.     The difference is not very significant however.    This is because the function in \cite{BFR}  can    be used to obtain the function here through a square root transformation, and on the other hand,
  the  method here can also be used to obtain the function   in \cite{BFR} through an easy adaptation.

\section{Proof of Theorem 1.1 assuming the main lemma} Let $\Bbb C$, $\Bbb C^*$, $\Bbb T$ and $\Bbb D$ denote the complex plane, the punctured plane, the unit circle and the  unit disk respectively.
Let $F$  be the quasi-regular branched covering map of the plane guaranteed by the main lemma. Let $\mu$ be the pull back of the standard complex structure by $F$. By the symmetry of  $F$ it follows that  $\mu$ is symmetric about the real axis.
Let $\psi: \Bbb C\to \Bbb C$ be the quasiconformal homeomorphism   which solves the Beltrami equation $\psi_{\bar{z}} = \mu \psi_{z}$ and fixes $-1$  and $1$.  Then  $F \circ \psi^{-1}$ is an entire function.  By the symmetry of $\mu$ we have $\psi(\bar{z}) = \overline{\psi(z)}$. Then $\psi(\Bbb T)$ is a quasi-circle  which is symmetric about the real axis.     Let $\eta$ be the Riemann isomorphism  which maps the exterior of $\Bbb T$ to the exterior of $\psi (\Bbb T)$ and fixes $\infty$ and $1$. By  the symmetry of $\psi(\Bbb T)$, $\eta(\bar{z}) = \overline{\eta(z)}$.  Note that this implies that  $\eta(-1) = -1$.    Let
$$
G =   F \circ \psi^{-1} \circ \eta.
$$
Then $G$ is holomorphic in the exterior of $\Bbb T$  and extends continuously to $\Bbb T$. Note that $G$ maps $\Bbb T$ to $\Bbb T$, by Schwarz reflection principle, $G$ can be  continued to be a meromorphic function in $\Bbb C^*$.  Let us still denote the map by $G$.  By the construction, it is not difficult to see that $G(\bar{z}) = \overline{G(z)}$,  $G(-1) = 1$ and  $G(1) = 1$.  This implies that $G$ is post-critically finite on $\Bbb T$.    Since  $\psi$  is normalized and with the quasiconformal constant being  uniformly bounded by the main lemma, $G$ is holomorphic in an annular neighborhood of $\Bbb T$ with modulus bounded away from zero.   This implies that  $G|\Bbb T$ belongs to a compact family of  analytic circle mappings.   By  Theorem 6.2 of Chapter 3 in \cite{MS},   there is a quasi-symmetric circle homeomorphism $h: \Bbb T \to \Bbb T$  such that  $G |\Bbb T= h^{-1} \circ \Lambda \circ h$   with  $\Lambda$ being  the square map.  It is not difficult to see that $h$ is also symmetric about the real axis in the sense $\overline{h(e^{it})} = h(e^{-it})$. In particular£¬ $h(1) = 1$ and $h(-1) = -1$.  Since $G|\Bbb T$ belongs to a compact family of analytic circle mappings,   the quasi-symmetric constant of $h$ is  bounded by some uniform constant $1< M<\infty$.   \begin{lem}  The quasi-symmetric circle mapping
$h$ can be extended to a quasiconformal homeomorphism  $H: \Bbb D  \to \Bbb D$  so that
 \begin{itemize}
 \item  $H(0) = 0$ and $H$ is symmetric about the real axis,   that is, $H(\bar{z}) = \overline{H(z)}$,  \item
  the quasiconformal constant  of $H$  is   bounded by some   constant depending only on $M$. \end{itemize} \end{lem}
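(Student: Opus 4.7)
The plan is to build $H$ as the composition of a standard quasiconformal extension of $h$ followed by a small correction that moves the image of the origin back to itself. In the first step I would apply the Douady--Earle extension to obtain a quasiconformal homeomorphism $H_0 \colon \mathbb{D} \to \mathbb{D}$ with $H_0|_{\mathbb{T}} = h$ and with quasiconformal constant depending only on $M$. The conformal naturality of the Douady--Earle extension, together with the fact that $h(\bar z) = \overline{h(z)}$ and that complex conjugation is an anti-conformal involution of $\mathbb{D}$ preserving $\mathbb{T}$, forces $H_0(\bar z) = \overline{H_0(z)}$. In particular the point $p := H_0(0)$ is real and lies in $(-1, 1)$.

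The map $H_0$ already has the correct boundary values and the correct symmetry; the only remaining issue is that in general $p \ne 0$. To fix this, I would post-compose $H_0$ with a quasiconformal self-homeomorphism $\phi$ of $\mathbb{D}$ satisfying $\phi|_{\mathbb{T}} = \textup{id}$, $\phi(p) = 0$, $\phi(\bar z) = \overline{\phi(z)}$, and having quasiconformal constant depending only on $|p|$. Such a $\phi$ can be written down explicitly, for instance by integrating a smoothly cut-off version of the constant vector field $-p$ supported in a compact subset of $\mathbb{D}$ and then symmetrizing about the real axis. Setting $H := \phi \circ H_0$ then produces the required extension.

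The step that demands the most care is the quantitative bound on $|p|$. Because $h$ ranges over the compact family of $M$-quasisymmetric circle homeomorphisms that are symmetric about the real axis and fix $\pm 1$, and because the Douady--Earle extension is continuous in this topology, the corresponding maps $H_0$ form a compact family of quasiconformal self-maps of $\mathbb{D}$ fixing $\pm 1$. Consequently $p = H_0(0)$ depends continuously on $h$ and remains in a compact subinterval of $(-1, 1)$, so $|p| \le 1 - \delta(M)$ for some $\delta(M) > 0$. This upper bound on $|p|$ yields an upper bound on the quasiconformal constant of $\phi$, and hence of $H$, in terms of $M$ alone. The main obstacle is thus the effective control on $|p|$ via this compactness argument; once that is in hand, the construction of $\phi$ and the verification of the properties of $H$ are routine.
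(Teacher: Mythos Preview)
Your proposal is correct, but it follows a different route from the paper.

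The paper builds $H$ by first extending $h$ across the real diameter $[-1,1]$ so that it fixes $0$ by construction: using the multiplier $\lambda=G'(1)>1$, one sends the points $1-\lambda^{-k}$ to $1-2^{-k}$ piecewise linearly, and extends oddly to $[-1,0]$. This gives a homeomorphism of the boundary of the upper half-disk $U$; transporting it by the Riemann map $\xi:U\to\mathbb D$ (which behaves like $z\mapsto z^2$ at $\pm1$) produces a quasisymmetric circle map whose constant is controlled by $M$ (and the bound on $\lambda$ coming from compactness of the family $G|\mathbb T$). One then extends that circle map quasiconformally over $\mathbb D$, conjugates back to $U$, and reflects to the lower half-disk. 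Symmetry and $H(0)=0$ are thus hard-wired into the construction, with no post-correction needed.

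Your approach via the Douady--Earle extension is cleaner and more conceptual: it uses only the quasisymmetric constant of $h$ and its conjugation symmetry, never the auxiliary map $G$ or its multiplier. The cost is the extra step of moving $p=H_0(0)$ back to $0$ and the compactness argument bounding $|p|$ away from $1$; both are standard, and your sketch of $\phi$ (a compactly supported isotopy along the real axis) is fine since $p\in\mathbb R$ already makes the symmetrization automatic if the cutoff is chosen symmetrically. One small remark: your appeal to conformal naturality for the conjugation symmetry is slightly outside the usual statement (which is for M\"obius maps), but the Douady--Earle barycenter is indeed equivariant under complex conjugation, so the conclusion $H_0(\bar z)=\overline{H_0(z)}$ is valid.
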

  \begin{proof}    We first define a homeomorphism  $h: [-1, 1] \to [-1, 1]$. To do that, note that
   $G$ is symmetric about the real axis and thus  $G'(1) = \lambda > 1$ is a real number.
 For $k \ge 0$, let $x_k = 1 - \frac{1}{{\lambda}^k}$  and $y_k = 1 - \frac{1}{{2}^k}$.   Define $h(x_k) = y_k$ for all $k \ge 0$,  and  define $h$ on each $[x_k, x_{k+1}]$ so that $h$ maps $[x_k, x_{k+1}]$ linearly onto $[y_k, y_{k+1}]$.  For $-1 \le x \le 0$, define $h(x) = -h(-x)$.  In this way we have defined a homeomorphism $h: \partial U \to \partial U$ so that $h(0) = 0$ where $U$ is the upper half unit disk.
Let  $\xi: U \to \Bbb D$ be the Riemann isomorphism which  fixes $-1$ and $1$ and maps $0$ to $-i$.   Then  $\xi$ behaves like the square map near  $-1$ and $1$ because it opens the right angles to straight angles at the two points.  From the definition of $h$ on $[-1, 1]$, especially its behavior near the two end points $1$ and $-1$,  it is not difficult to check that  $\xi \circ h \circ \xi^{-1}: \Bbb T \to \Bbb T$ is quasi-symmetric with the quasi-symmetric constant depending only on that of $h$.  Hence $\xi \circ h \circ \xi^{-1}$ can be extended to a quasiconformal homeomorphism $\Pi: \Bbb D\to \Bbb D$ with the quasiconformal constant bounded by some uniform constant.  Define $H: U \to U$ by setting  $H(z) = \xi^{-1} \circ \Pi \circ \xi$. On the lower half unit disk, we define $H$ by setting $H(\bar{z}) = \overline{H(z)}$.
This completes the proof of the lemma.
  \end{proof}
Define
\begin{equation}
\widehat{G}(z) =
\begin{cases}
G(z)  & \text{for $z \in {\Bbb C} - \mathbb{D}$}, \\
H^{-1} \circ \Lambda \circ H(z)  & \text{
for $z \in \mathbb{D}$}.
\end{cases}
\end{equation}
Let $\nu_0$ be the complex structure in $\Bbb D$ given by the pull back of the standard one  by $H$. Since $H$ is symmetric about the real axis, $\nu_0$ is symmetric about the real axis.  We then pull back $\nu_0$ to the complex plane by the iteration of $G$  and get a  $\widehat{G}$-invariant complex structure $\nu$.  Thus $\nu$ is symmetric about the real axis.  Since all the maps involved  are uniformly quasiconformal, we have  $\|\nu\|_\infty < \kappa$ for some uniform $0< \kappa < 1$.  Let $\Phi$ be the quasiconformal homeomorphism of the plane which fixes $0$ and $1$ and solves the Beltrami equation given by $\nu$.     Since $\nu$ is symmetric about the real axis, so is $\Phi$.
Let $$f = \Phi \circ \widehat{G} \circ \Phi^{-1}.$$  Then $f$ is an entire function.
\begin{lem}\label{part-1} The following assertions hold.
\begin{itemize}
\item  $f$ has no asymptotic values,
\item  $f$ is symmetric about the real axis, and  fixes $0$ and $1$,
\item $f$ has a critical point at $0$ and all the other critical points   are negative reals and are mapped either to $0$ or to $1$,
\item  all the Fatou components of $f$ are quasi-disks and are eventually mapped to the super-attracting component  at the origin.
\end{itemize}
\end{lem}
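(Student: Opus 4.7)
The plan is to establish each of the four assertions first for the quasi-regular map $\widehat{G}$ and then transport them to $f$ via the quasiconformal conjugacy $\Phi$. Since $\Phi$ is a homeomorphism of $\mathbb{C}$ that fixes $0$ and $1$ and is symmetric about the real axis, the statements on symmetry, fixed points, critical points, asymptotic values and Fatou components all pull back automatically, and the work lies entirely in the analysis of $\widehat{G}$, which equals $G$ outside $\mathbb{D}$ and $H^{-1}\circ\Lambda\circ H$ on $\mathbb{D}$.

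The symmetry of $\widehat{G}$ about the real axis is immediate because every ingredient in its construction ($F$, $\psi$, $\eta$, $\Lambda$, $H$) is symmetric; the fixed points come from $\widehat{G}(0)=H^{-1}(\Lambda(H(0)))=0$ and $\widehat{G}(1)=F(\psi^{-1}(\eta(1)))=F(1)=1$. The critical points of $\widehat{G}$ inside $\mathbb{D}$ reduce to the single one at $0$ inherited from $\Lambda$ via $H$ (which sends $0$ to $0$), while those outside $\mathbb{D}$ are the $(\psi^{-1}\circ\eta)^{-1}$-preimages of the critical points of $F$ in $\mathbb{C}\setminus\mathbb{D}$, namely the centers of the squares $P_n$ (mapped by $\widehat{G}$ to $0$) and the real vertices of the $P_n$ (mapped to $1$). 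By the main lemma these all sit on the negative real axis, and since $\psi$ and $\eta$ both preserve the real axis and are orientation-preserving there, their inverses keep the negative reals in place, giving the claim about critical points. The absence of asymptotic values is equally quick: outside a sufficiently large compact set $\widehat{G}=G=F\circ\psi^{-1}\circ\eta$, the composition $\psi^{-1}\circ\eta$ is a self-homeomorphism of the exterior of $\mathbb{T}$ fixing $\infty$, and $F$ itself has no asymptotic values.

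The substantive step is the Fatou-component statement. The disk $\mathbb{D}$ is forward-invariant and $\widehat{G}|_{\mathbb{D}}$ is conjugate to $\Lambda$ via $H$, so $\mathbb{D}$ is the immediate super-attracting basin at $0$, and $\widehat{G}^{-1}(\mathbb{D})\setminus\mathbb{D}=\bigcup_n \eta^{-1}(\psi(P_n))$ is a disjoint union of quasi-disks, being quasiconformal images of squares followed by the conformal $\eta^{-1}$. The post-critical set of $\widehat{G}$ is $\{0,1\}$, and since $1$ is a repelling fixed point for the circle dynamics on $\mathbb{T}$, the quasi-circle $\mathbb{T}$ belongs to the Julia set and the only periodic Fatou cycle is the basin at $0$. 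Because $f$ lies in the Speiser class $\mathcal{S}$ with singular set $\{0,1\}$, the theorems of Goldberg--Keen and Eremenko--Lyubich exclude wandering domains and Baker domains, so every Fatou component is eventually mapped into the basin of $0$. The main obstacle is preserving the quasi-disk property through iterated pullbacks: one needs to show that each iterated preimage of $\Phi(\mathbb{D})$ under $f$ is a quasi-disk with a uniformly controlled quasi-constant, which requires distortion estimates for the inverse branches of $f$ away from the post-critical set $\{0,1\}$; transferring everything by $\Phi$ then finishes the proof.
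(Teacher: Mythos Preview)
Your approach is essentially the same as the paper's: establish the properties for $\widehat{G}$ and transport them by $\Phi$. The arguments for symmetry, fixed points, critical points, and absence of asymptotic values are correct and match the paper. For the classification of Fatou components the paper spells out the exclusion of Siegel disks, parabolic and attracting basins explicitly (both singular values are fixed, so no infinite singular orbit is available and no accumulation on a rotation-domain boundary is possible), whereas you compress this into the observation that $1$ is repelling; your version is terser but leads to the same conclusion once combined with the Eremenko--Lyubich/Goldberg--Keen results you already cite.

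The one point where you overshoot is the last step. You flag as the ``main obstacle'' that iterated preimages of $\Phi(\mathbb{D})$ must be quasi-disks \emph{with a uniformly controlled quasi-constant}, and suggest this requires distortion estimates. No uniformity is asserted or needed: the lemma only claims that each Fatou component is \emph{a} quasi-disk. The paper's argument is accordingly much simpler. The first-generation preimage components $\Omega_n=\Phi\circ\eta^{-1}\circ\psi(P_n)$, $n\ge 1$, are quasi-disks (quasiconformal images of squares), and their closures are disjoint from the post-singular set $\{0,1\}$. Hence every inverse branch of $f^k$ over a neighborhood of $\overline{\Omega_n}$ is a univalent holomorphic map, and a univalent holomorphic image of a quasi-disk is again a quasi-disk (e.g.\ by the Ahlfors three-point condition, since such a map is bi-Lipschitz on the compact closure). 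So each Fatou component is a quasi-disk with its own constant, and no uniform distortion control enters. Once you drop the spurious uniformity requirement, your outline coincides with the paper's proof.
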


\begin{proof} Suppose $f$ has an asymptotic value.  From the construction of $f$, $\widehat{G}$ and thus $F$ would have an asymptotic value. This contradicts the main lemma.  This proves the first assertion.

Since $\widehat{G}$ and $\Phi$ are both symmetric about the real axis, so is $f$. Since $\Phi$ and $\widehat{G}$ fix $0$ and $1$, $f$ fix $0$ and $1$ also. This proves the second assertion.

 By the symmetry of $\Phi$, $\Phi$ maps the real axis to the real axis. Since $\Phi$ fixes $0$ and $1$, $\Phi$ maps the negative reals to the negative reals.  From the construction of $f$, the critical set of $f$ are the $\Phi$-image of that of $\widehat{G}$.  The critical points of $\widehat{G}$, except the one at the origin, are exactly those of $G$ which belong to $(-\infty, -1]$. By the construction of $G$, these are the $(\eta^{-1} \circ \psi)$-image of the critical points of $F$ which belongs to $(-\infty, -1]$. By the symmetry and the normalization conditions of $\psi$ and $\eta$,    $\eta^{-1} \circ \psi$ maps $(-\infty, -1]$   homeomorphically to $(-\infty, -1]$.  This, together with the fact that  $\Phi$ maps $(-\infty, 0]$ homeomorphically to $(-\infty, 0]$, implies  that all the other critical points of $f$ are negative reals. Since $F$ maps any of its critical points either to $1$ or to $0$,  so does  $\widehat{G}$ by the definition of  $\widehat{G}$. Since $\Phi$ fixes $0$ and $1$,  it follows that $f$ maps any of its critical points either to $0$ or to $1$. This proves the third assertion.

 As we have seen,   $f$ has   two singularly values, $0$ and $1$, both of which are critical values.  By \cite{EL} and \cite{GK}, $f$ has neither Baker domains nor wandering domains.  Since entire functions have no Herman rings,   every
 Fatou component of $f$  must be  eventually  mapped into some  periodic cycle of either Siegel disks, or parabolic components,  or attracting components, or supper-attracting components.  Note that both the two singular values of $f$ are fixed.    Since each point of
 the boundary of a  Siegel disk must be accumulated by the forward orbit of the singular values,  $f$ has no Siegel disks.  Since any parabolic basin or attracting basin must contain an infinite forward orbit of some singular value, $f$ have not periodic parabolic or attracting cycles. Thus all the periodic cycles of Fatou components of $f$  must be super-attracting.  Let $\Omega_0$ be the Fatou component containing the origin. Since the origin is the only    periodic (fixed) critical point of $f$,    all the Fatou components must be eventually mapped to  $\Omega_0$.  Since $\widehat{G}$ is conjugate to the square map on $\overline{\Bbb D}$ and since the iteration of the square map is normal in $\Bbb D$ but not normal at any point in $\partial \Bbb D$,  we must have
    $\Omega_0 = \Phi(\Bbb D)$. In particular, $\Omega_0$ is a quasi-disk.  By the construction of $f$,   any
    component of $f^{-1}(\Omega_0)$  other than $\Omega_0$   is the $(\Phi \circ \eta^{-1} \circ \psi)$-image of some $P_n$ with $n \ge 1$,  and is thus a quasi-disk. Since the closure of each $\Phi \circ \eta^{-1} \circ \psi (P_n), n \ge 1$, does not intersect the forward orbit of the singular values, its pre-images under the iteration of $f$ must be quasi-disks also. So all the Fatou components of $f$ are quasi-disks. This proves the last assertion of the lemma.
\end{proof}

Now let us  prove a uniform contraction property of $G$. Let ${\rm diam}(\cdot)$ denote the diameter with respect to the Euclidean metric.
\begin{lem}\label{uniform-contraction}
For any $L > 1$ there is an $n \ge 1$ depending only on $L$  and  a curve segment $\Gamma$ attached to $\Bbb T$ from the outside such that \begin{itemize} \item  ${\rm diam}(\Gamma)< 1$ and \item   $G^n:  \Gamma \to [-L, -1]$ is a homeomorphism. \end{itemize}
\end{lem}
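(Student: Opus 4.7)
The plan is to construct $\Gamma$ as an iterated preimage of the segment $[-L,-1]$ along inverse branches of $G$ anchored at $\T$, and to force the Euclidean diameter below $1$ by invoking a shrinking-lemma argument based on the finite postsingular set of $G$.

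First I would identify the singular values of the extended meromorphic map $G:\C^*\to\EC$. By the main lemma, the critical values of $F$ are $\{0,1\}$ and $F$ has no asymptotic values; the same properties pass to $G$ on the exterior of $\T$. Schwarz reflection contributes the reflected critical values $\{1,\infty\}$ inside $\T$, so the full singular set of $G$ on $\C^*$ is $\{0,1,\infty\}$. Since these three points are all fixed by $G$, the postsingular set is the finite set $P=\{0,1,\infty\}$, which is disjoint from the compact arc $[-L,-1]$.

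Next, using $G|_\T=h^{-1}\circ\Lambda\circ h$ with $\Lambda(w)=w^2$, I choose inductively a sequence $p_n\in\T$ with $G(p_n)=p_{n-1}$ and $G^n(p_n)=-1$, taking at each step the preimage of $p_{n-1}$ on $\T$ that is farther from $1$. Since for any $w\in\T$ at least one of $\pm\sqrt{w}$ is at Euclidean distance $\ge\sqrt{2}$ from $1$, and since $h$ is quasisymmetric with constant $\le M$ and fixes $1$, this yields $|p_n-1|\ge c(M)>0$ uniformly in $n$. Because $[-L,-1]$ avoids $P$, the local inverse branch of $G^n$ sending $-1$ to $p_n$ extends by analytic continuation along the whole segment, producing an arc $\Gamma^{(n)}$ with $G^n:\Gamma^{(n)}\to[-L,-1]$ a homeomorphism and endpoint $p_n$. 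Because $G$ is orientation preserving with $G(\T)=\T$, it respects the inside/outside partition of any tubular neighborhood of $\T$, so $\Gamma^{(n)}$ is attached to $\T$ from the outside.

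The key step is to show $\diam(\Gamma^{(n)})\to 0$ as $n\to\infty$. I invoke the shrinking lemma: in the hyperbolic metric of the triply punctured sphere $U:=\EC\setminus P$, the map $G^n:\EC\setminus G^{-n}(P)\to U$ is a (possibly infinite-degree) covering, hence a local isometry of hyperbolic metrics, while the inclusion of its domain into $U$ is a strict contraction whose contraction factor becomes unbounded as $n\to\infty$, because the preimages $G^{-n}(P)$ accumulate densely on the Julia set and in particular on $\T$. Consequently the hyperbolic diameter of $\Gamma^{(n)}$ in $U$ tends to zero; since $p_n$ stays in the compact subset $\{z\in\T:|z-1|\ge c(M)\}\subset U$ on which the hyperbolic and Euclidean metrics are comparable, the Euclidean diameter also tends to zero. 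Picking $n=n(L)$ with $\diam(\Gamma^{(n)})<1$ and setting $\Gamma=\Gamma^{(n)}$ completes the proof.

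The main obstacle is to justify the shrinking-lemma argument in this transcendental setting, where $G$ has essential singularities at $0$ and $\infty$. The required inputs, however, are exactly those furnished by the main lemma: a finite postsingular set and the absence of finite asymptotic values, which are precisely what is needed for the classical hyperbolic-metric proof to carry over to our $G$.
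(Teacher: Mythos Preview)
Your route diverges from the paper's at a decisive point: the paper chooses the anchor points $z_k=h^{-1}(e^{i\pi/2^k})$ so that they \emph{converge to the puncture} $1$, whereas you deliberately keep your anchors $p_n$ in a compact subset of $U=\C\setminus\{0,1\}$ bounded away from $1$. Because of this, the paper needs only the immediate Schwarz--Pick inequality $l_U(\Gamma_k)\le l_U([-L,-1])$ (from the covering $G:G^{-1}(U)\to U$), and then exploits that the hyperbolic density of $U$ blows up near the cusp $1$: a curve of bounded hyperbolic length anchored at a point within $\epsilon$ of $1$ must stay in an $o_\epsilon(1)$-neighbourhood of $1$, since the hyperbolic distance from that anchor to any fixed circle $\{|z-1|=\delta\}$ tends to infinity as $\epsilon\to0$. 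No strict contraction is ever invoked, and uniformity in $G$ is automatic because $z_k\to1$ uniformly (the quasisymmetric constant of $h$ is uniformly bounded).

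Your argument, by contrast, must force $l_U(\Gamma^{(n)})\to0$, and the justification you give has a gap. You claim the contraction factor of $V_n=\EC\setminus G^{-n}(P)\hookrightarrow U$ becomes unbounded along $\Gamma^{(n)}$ because $G^{-n}(P)$ accumulates on $\T$; but $\Gamma^{(n)}$ touches $\T$ only at one endpoint and otherwise lies in the exterior, and you give no reason why points of $G^{-n}(P)$ are close to that exterior part. Density on $\T$ alone does not control $\rho_{V_n}$ away from $\T$. Your approach can be salvaged by iterating a one-step estimate instead: first observe (from the non-strict bound and the fact that \emph{every} intermediate anchor $p_k$ lies in your compact $K$) that all lifts $\Gamma^{(k)}$, $0\le k\le n$, lie in a fixed compact $K'\subset U$; then use $G(-1)=1$ to get $G^{-1}(U)\subset U\setminus\{-1\}$, so on $K'$ the inclusion $G^{-1}(U)\hookrightarrow U$ contracts by a factor $c=c(K')<1$ independent of $G$; this yields $l_U(\Gamma^{(n)})\le c^{\,n}\,l_U([-L,-1])$. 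That works, but it is strictly more effort than the paper's device of letting the anchor drift to the cusp.
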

\begin{proof} Let $h$ be the quasi-symmetric circle homeomorphism which conjugates $G|\Bbb T$ to the square map.
For $k \ge 1$ let $z_k =  h^{-1}(e^{i\pi/2^k})$.  Since  the quasi-symmetric constant of $h$ is bounded by some uniform $1< M < \infty$, it follows that  $z_k \to 1$ uniformly in the following sense: for any $\epsilon > 0$, there is an $N \ge 1$ independent of $G$ so that  $|z_k -1| < \epsilon$ for all $k \ge N$.  Let $\Gamma_k$ be the curve segment attached  to  $z_k$ from the outside of $\Bbb T$ so that $G^k : \Gamma_k \to [-L, -1]$ is a homeomorphism.
Let $X =  \Bbb C  \setminus \{0,  1\}$ and $Y =  G^{-1}(X)$. Then $G:  Y \to  X$ is a holomorphic covering map. From $G(-1) = 1$ it follows that  $Y \subset X \setminus \{-1\}$.  Thus the pull back by $G$
   decreases the hyperbolic metric in $X$.   Let $l_X(\cdot)$ denote the length with respect to the hyperbolic metric in $X$. Then $l_X(\Gamma_k) \le  l_X([-L, -1])$ for all $k \ge 1$.    This, together with that  $z_k \to 1$ uniformly, implies the existence of the integer $n$ such that  ${\rm diam}(\Gamma_n) < 1$.
\end{proof}

To construct the entire function $f$ with non-locally connected Julia sets,  we need to
 take appropriate squares $\{P_n\}_{n\ge 1}$ in the construction of the quasi-regular map $F$.  As we have seen above,  $\Bbb D$   corresponds to a quasi-disk  $\Omega_0$  which is fixed by  $f$, and  each  $P_n$   corresponds to a quasi-disk $\Omega_n$   which is mapped to $\Omega_0$ by $f$.   More precisely,
 $$
 \partial \Omega_n = \Phi \circ \eta^{-1} \circ \psi (\partial P_n) \:\:\:\: \hbox{  and  }  \partial \Omega_0 = \Phi  (\Bbb T).
 $$By symmetry, all $\Omega_n$  are symmetric about the real axis.  From the construction, it also follows that $\{-1, 1\} \subset \partial \Omega_0$ and $f(-1) = f(1) = 1$.  Let $\partial \Omega_l \cap \Bbb R = \{a_{l}, b_l\}$ with $a_l > b_l$.    Since  $\Phi$ is  uniformly quasiconformal and normalized, it follows that there exists some $R > 0$ independent of the choice of $\{P_n\}_{n\ge 1}$ such that $$\overline{\Omega_0} \subset B_R(0)$$ where $B_R(0)$ is the disk centered at the origin and with radius $R$. From the fact that $\psi(-1) = \eta(-1) = \Phi(-1) = -1$ it follows that $a_1 = -1$.   Since $\Phi$, $\eta$ and $\psi$ are  uniformly quasiconformal and normalized, we can take  $P_1$      large enough  so that $|b_1| > R +1$ holds
 for any choice of the remaining squares $\{P_n\}_{n\ge 2}$.     Let $n_1 = 0$.   Now for $k \ge 1$, suppose the diameters of all $P_l$, $1 \le l \le k$,  have been determined such that  for each $1 \le l \le k$,    there is an integer $n_l$ and a branch of $f^{-n_1}$, say $\chi_l$,   such that $|\chi_l(a_l)| < R$ and $|\chi_l(b_l)| > R+1$ hold  for any  choice of $\{P_n\}_{n\ge l+1}$.   Let us determine  the diameter of $P_{k+1}$ and $n_{k+1}$ as follows.    Since $P_k$ has determined and since $\Phi$, $\eta$ and $\psi$ are uniformly quasiconformal and normalized,    it follows that $a_{k+1} = b_k$   is bounded and the bound does not depend on the choice  of $\{P_n\}_{n\ge k+1}$.    By Lemma~\ref{uniform-contraction}  and the fact that $\Phi$ is uniformly quasiconformal and normalized, we can choose $n_{k+1}$  independent of the choice of $\{P_n\}_{n\ge k+1}$,   such that the corresponding  branch of $f^{-n_{k+1}}$  , say $\chi_{k+1}$,    satisfying  $|\chi_{k+1}(a_{k+1})|< R$.  Since  $F$ is uniformly bounded on compact sets  by the main lemma, and since  $\Phi$, $\eta$ and $\psi$  are uniformly quasiconformal and normalized, it follows that $f$ is uniformly bounded on compact sets. Thus  for the $n_{k+1}$, we can choose $P_{k+1}$ large enough  to ensure that   $|\chi_{k+1}(b_{k+1})| > R+1$ holds for any choice of $\{P_n\}_{n\ge k+2}$.

 In this way we get a sequence of squares $\{P_n\}$.  For such $\{P_n\}$, let $F$ be the quasi-regular map in the main lemma and  $f$ be the   entire function obtained as above.
 \begin{lem}\label{imme-1} There  exist    an $\epsilon > 0$  and an infinite   sequence of Fatou components of $f$ such that  the spherical diameter of each Fatou component in this sequence is greater than $\epsilon$.
 \end{lem}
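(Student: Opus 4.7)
The plan is to take, for each $k \ge 1$, the set $\Omega_k^* := \chi_k(\Omega_k)$, where $\chi_k$ is the branch of $f^{-n_k}$ produced in the inductive construction above, and to show that the $\Omega_k^*$ form an infinite family of pairwise distinct Fatou components of $f$ whose spherical diameters are all bounded below by one and the same positive constant.

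First I would check that each $\Omega_k^*$ really is a Fatou component. The branch $\chi_k$ was only described at the points $a_k,b_k$, but since $\Omega_k$ is a quasi-disk, hence simply connected, and is disjoint from the post-singular set $\{0,1\}\subset\overline{\Omega_0}$ of $f$ (both critical values are fixed, and $\Omega_k$ is a Fatou component distinct from $\Omega_0$), the branch extends univalently to all of $\Omega_k$. Then $\chi_k(\Omega_k)$ is an open connected subset of $f^{-n_k}(\Omega_k)$, and since $\Omega_k$ is a Fatou component, so is $\Omega_k^*$.

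Next I would obtain the uniform lower bound on the spherical diameter. The closure of $\Omega_k^*$ contains both $\chi_k(a_k)$, with $|\chi_k(a_k)|<R$, and $\chi_k(b_k)$, with $|\chi_k(b_k)|>R+1$. Being connected, $\overline{\Omega_k^*}$ meets the circle $\{|z|=R+\tfrac12\}$ at some point $p_k$. Both $\chi_k(a_k)$ and $p_k$ lie in the compact set $\overline{B_{R+1}(0)}\subset\C$, on which the spherical and Euclidean metrics are bi-Lipschitz equivalent, and $|p_k-\chi_k(a_k)|\ge\tfrac12$. Hence $\diam_{\mathrm{sph}}(\Omega_k^*)\ge\epsilon$ for some $\epsilon>0$ depending only on $R$.

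The remaining, and principal, step is to show that $\Omega_j^*\neq\Omega_k^*$ whenever $j\neq k$, so that the family is actually infinite. Suppose instead they coincide, and assume without loss of generality $n_j\le n_k$. Applying $f^{n_j}$ to the common set yields $\Omega_j$, and then $f^{n_k-n_j}$ applied to $\Omega_j$ must give $\Omega_k$. If $n_j=n_k$, this directly forces $\Omega_j=\Omega_k$ and hence $j=k$; if $n_j<n_k$, then because $f(\Omega_j)=\Omega_0$ and $\Omega_0$ is $f$-fixed, one gets $\Omega_k=\Omega_0$, contradicting $\Omega_k\neq\Omega_0$ for $k\ge 1$. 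This distinctness argument is the only mildly delicate point; everything else is routine bookkeeping with the ingredients already assembled in the inductive construction of the $P_k$ and in Lemma~\ref{uniform-contraction}.
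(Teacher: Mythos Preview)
Your proposal is correct and follows essentially the same approach as the paper: the paper's proof is a two-sentence appeal to the inductive construction of the $P_n$, asserting that the resulting Fatou components meet both circles $\{|z|=R\}$ and $\{|z|=R+1\}$ and hence have spherical diameter bounded below by a constant depending only on $R$. You have simply unpacked this, making explicit which components are meant (your $\Omega_k^*=\chi_k(\Omega_k)$), why they are Fatou components, and why they are pairwise distinct---details the paper leaves to the reader.
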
 \begin{proof}
 From the construction of  $P_n$,  it follows that there exist a constant $R > 0$ and an infinite   sequence of Fatou components of $f$ which intersect both of the two Euclidean circles $\{z: \:|z| = R\}$ and $\{z: \: |z| = R +1\}$.   It is clear that the spherical diameter of these Fatou components have a uniform positive lower bound depending only on $R$.
 \end{proof}

Before we show that $J(f)$ is not locally connected let us  recall a criterion of the local connectedness.

\begin{thm}[{Whyburn, \cite{Why42}}]\label{Whyburn's-criterior}
A closed set in $\widehat{\Bbb C}$  is locally connected if and only if it satisfies the following two conditions:
\begin{enumerate}
\item  The boundary of each complementary component of it is locally connected;
\item  For an arbitrary $\epsilon>0$, the number of the complementary components whose diameters with respect to the spherical metric exceed $\epsilon$ is finite.
\end{enumerate}
\end{thm}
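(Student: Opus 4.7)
The plan is to prove the two directions of Whyburn's criterion separately, using the standard fact that a compact metric space $A$ is locally connected if and only if it is \emph{uniformly} locally connected, meaning that for every $\epsilon>0$ there is $\delta>0$ such that any two points of $A$ at spherical distance less than $\delta$ lie in a connected subset of $A$ of spherical diameter less than $\epsilon$.

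For the necessity direction, assume $A\subset\widehat{\Bbb C}$ is closed and locally connected. To establish (2), I would argue by contradiction: if infinitely many complementary components $\{U_n\}$ had spherical diameter exceeding some fixed $\epsilon>0$, then since each $\partial U_n$ is a continuum in $A$ of diameter at least $\epsilon/2$, compactness of $\widehat{\Bbb C}$ produces a Hausdorff-convergent subsequence $\partial U_{n_k}\to K$. The disjointness of the $U_n$ forces $K\subset A$, and near any point $y\in K$ the limiting structure prevents joining nearby points of distinct $\partial U_{n_k}$ by small connected sets in $A$, contradicting uniform local connectedness. Establishing (1) is more subtle because a closed subset of a locally connected set need not be locally connected in general; here the key point is that $\partial U$ is the frontier of a complementary domain in the sphere, so one can transfer uniform local connectedness from $A$ to $\partial U$ by performing a simple planar surgery: replacing any sub-arc of a connecting continuum in $A$ that wanders into $U$ by a sub-arc running along $\partial U$, and using condition (1) (or rather its own derivation) on the connected boundary to keep diameters controlled.

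For the sufficiency direction, assume (1) and (2), fix $x\in A$, and fix $\epsilon>0$. I would build a connected neighborhood of $x$ in $A$ of spherical diameter less than $\epsilon$ as follows. By (2), only finitely many complementary components $U_1,\ldots,U_N$ have spherical diameter exceeding $\epsilon/10$. For each $U_i$ whose closure meets the spherical ball $B(x,\epsilon/3)$, condition (1) supplies a connected sub-arc of $\partial U_i$ inside $B(x,\epsilon/2)$ covering the portion of $\partial U_i$ near $x$. The remaining complementary components meeting $B(x,\epsilon/3)$ all have diameter at most $\epsilon/10$, so any path in $\widehat{\Bbb C}$ meeting one of them can be rerouted along its boundary without leaving $B(x,\epsilon)$. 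Taking a sufficiently small closed spherical disk around $x$, deleting the large $U_i$'s, adjoining the finitely many boundary arcs from (1), and intersecting with $A$ yields a connected neighborhood of $x$ in $A$ of diameter less than $\epsilon$.

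The main obstacle is the surgery argument needed in both directions: proving (1) in the necessity direction and assembling the connected neighborhood in the sufficiency direction both require a careful interplay between the topology of $\widehat{\Bbb C}$ and the set $A$. In particular, one must use that complementary components of a compactum in $\widehat{\Bbb C}$ are open, pathwise connected, and separated from one another by their boundaries, and one must perform arc-modification arguments to transfer local-connectedness properties between $A$ and the $\partial U$'s without blowing up diameters. A fully rigorous treatment would invoke classical results from Whyburn's \emph{Analytic Topology}, particularly the Hahn--Mazurkiewicz theorem and the characterization of locally connected plane continua via their complementary domains, which is the route taken in the cited reference.
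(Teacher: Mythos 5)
You should first note that the paper does not prove this statement at all: it is quoted, with attribution, as a classical theorem from Whyburn's \emph{Analytic Topology} \cite{Why42}, and is used as a black box in the proof of Lemma~\ref{part-2}. So there is no argument of the authors to compare yours with; the only question is whether your sketch stands on its own, and as written it does not. The two hardest points are exactly the ones you pass over. First, necessity of (1) --- that the boundary of every complementary domain of a locally connected closed set is again locally connected --- is a genuine theorem (essentially Torhorst's theorem), and your description of this step is circular: you propose to ``keep diameters controlled'' by ``using condition (1) (or rather its own derivation)'', i.e.\ you invoke the statement being proved. No actual mechanism is given for why two nearby points of $\partial U$ that lie in a small connected subset of $A$ must also lie in a small connected subset of $\partial U$; that is where all the work lies. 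Second, in the necessity of (2) the claimed contradiction is not one: nothing prevents two nearby points on boundaries of distinct components $\partial U_{n_k}$ from being contained in a small connected subset of $A$, so ``the limiting structure prevents joining nearby points'' is an assertion, not an argument. The classical proof requires a genuine separation/crossing argument at this point, and it is absent from your outline.

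Your sufficiency direction is closer to the standard route (join two nearby points of $A$ by an arc in $\widehat{\Bbb C}$ and reroute it around the complementary components it crosses, using (2) so that all but finitely many detours are automatically small, and using (1) --- via uniform local connectedness of the finitely many large boundaries --- to make the remaining detours small), but the execution has two gaps: the ``connected sub-arc of $\partial U_i$ inside $B(x,\epsilon/2)$ covering the portion of $\partial U_i$ near $x$'' need not exist, since $\partial U_i\cap B(x,\epsilon/2)$ need not be connected or lie in a single small connected piece of $\partial U_i$; and the set you finally assemble (a small disk with the large components deleted, the boundary arcs adjoined, then intersected with $A$) is never shown to be connected, which is the whole point of the construction. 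Ending with the remark that ``a fully rigorous treatment would invoke \dots Whyburn's \emph{Analytic Topology}'' concedes that the proof is being outsourced to the very reference in question; if that is the intent, the honest course is to cite \cite{Why42}, exactly as the paper does, rather than to present a sketch whose crucial steps are placeholders.
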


\begin{lem}\label{part-2}
$J(f)$ is not locally connected.
\end{lem}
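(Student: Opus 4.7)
The proof should be essentially a direct application of Whyburn's criterion (Theorem~\ref{Whyburn's-criterior}) to the sequence of large Fatou components produced by Lemma~\ref{imme-1}. There is little left to do: the whole point of the elaborate inductive choice of the squares $\{P_n\}$ was to force infinitely many Fatou components to have spherical diameter bounded below.

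The plan is as follows. First I would view $J(f)\cup\{\infty\}$ as a closed subset of $\widehat{\Bbb C}$ so that Whyburn's theorem is applicable; since $f$ is transcendental entire, $\infty$ lies in $J(f)$ already in the sense that every neighborhood of $\infty$ meets $J(f)$, so the complementary components of $J(f)\cup\{\infty\}$ in $\widehat{\Bbb C}$ are exactly the (bounded) Fatou components of $f$. Second, I would recall from Lemma~\ref{part-1} that every Fatou component of $f$ is a quasi-disk, so in particular its boundary is a quasi-circle and hence locally connected; this shows that condition (1) of Whyburn's criterion is satisfied for every complementary component, so non-local-connectedness must come from a failure of condition (2). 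Third, I would invoke Lemma~\ref{imme-1}: it provides an $\epsilon>0$ and an infinite sequence of Fatou components each of spherical diameter at least $\epsilon$. Hence condition (2) of Whyburn's theorem fails for $J(f)\cup\{\infty\}$.

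By the contrapositive of Whyburn's criterion, $J(f)\cup\{\infty\}$ is not locally connected as a subset of $\widehat{\Bbb C}$. Since condition (1) holds and the only way (2) can fail is through accumulation of the large complementary components at some finite point of $J(f)$ (the diameters being measured spherically, and the components themselves being bounded in $\Bbb C$), the point of non-local-connectedness lies in the finite plane, so $J(f)$ itself is not locally connected in the Euclidean topology. This finishes the proof of the lemma, and together with Lemma~\ref{part-1} completes the proof of Theorem 1.1.

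There is no real obstacle here: the lemma is essentially a corollary of Lemma~\ref{imme-1} together with Whyburn's theorem. The only point requiring a small sentence of care is the passage from ``not locally connected in $\widehat{\Bbb C}$'' to ``not locally connected in $\Bbb C$'', which is handled by observing that the large-diameter components form a bounded family (they sit inside a fixed disk $B_R(0)$ up to the contracting pullbacks $\chi_l$, all of whose images remain in a bounded region by construction), so they must accumulate at a finite Julia point rather than at $\infty$.
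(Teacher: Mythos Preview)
Your overall approach matches the paper's: apply Whyburn's criterion to $J(f)\cup\{\infty\}$, using Lemma~\ref{imme-1} to violate condition~(2). The difference, and the place where your argument is incomplete, is the last step --- passing from ``$J(f)\cup\{\infty\}$ is not locally connected in $\widehat{\Bbb C}$'' to ``$J(f)$ is not locally connected in $\Bbb C$''.

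Two issues. First, the large Fatou components of Lemma~\ref{imme-1} are \emph{not} asserted to lie in a fixed bounded disk; the construction only guarantees that each one crosses the fixed annulus $\{R\le |z|\le R+1\}$. So ``they form a bounded family'' is not quite right; what you do have is that each contains a point on the compact circle $\{|z|=R\}$, hence some subsequence accumulates at a finite point. Second, and more seriously, Whyburn's theorem as stated is a global equivalence: it tells you the set is not locally connected, but it does not tell you \emph{where}. Your sentence ``the point of non-local-connectedness lies in the finite plane'' is exactly what needs proof, and it does not follow from Theorem~\ref{Whyburn's-criterior} alone. Accumulation of large complementary components at a finite point $p$ does suggest failure of local connectedness at $p$, but turning this into a rigorous argument requires a pointwise refinement of Whyburn that you have not stated or proved.

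The paper closes this gap differently and more cleanly. It first observes (citing Kisaka) that since all Fatou components are bounded and simply connected, $J(f)$ is connected; hence $J(f)\cup\{\infty\}$ is a \emph{continuum} in $\widehat{\Bbb C}$. It then invokes a standard continuum-theory fact (Nadler, Corollary~5.13): a continuum cannot fail to be locally connected at a single point. Thus $J(f)\cup\{\infty\}$ is locally connected if and only if $J(f)$ is, and the conclusion follows immediately from Whyburn plus Lemma~\ref{imme-1}. You should replace your final paragraph with this argument; note that it requires the connectedness of $J(f)$, which you did not mention.
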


\begin{proof}

By Lemma~\ref{part-1} all the Fatou components of $f$ are bounded and simply connected.  By Theorem 2 in \cite{Ki}   $J(f)$ is connected. Thus   $J(f) \cup \{\infty\}$ is a continuum in $\widehat{\Bbb C}$. By Corollary 5.13  in \cite{Na} a continuum in $\widehat{\Bbb C}$  can not fail to be locally connected at a single point. It follows that $J(f)$ is locally connected if and only if $J(f) \cup \{\infty\}$ is locally connected.
By Lemma~\ref{imme-1}    and  Theorem~\ref{Whyburn's-criterior} it follows that $J(f) \cup  \{\infty\}$  is not locally connected. Thus $J(f)$ is not locally connected.  This proves Lemma~\ref{part-2}.
\end{proof}

Theorem 1.1 now follows from Lemmas~\ref{part-1} and ~\ref{part-2}.

\section{Proof of the main lemma}

  Suppose  $\{P_n\}$  is a sequence of squares in the Main Lemma.  See Figure 2 for an illustration.
Here the diameters of $P_1$, $P_2$ , $P_3$ and $P_4$  are $2$, $4$, $2$ and $8$ respectiely. We divide the plane  into disjoint union of
``half strips"  and squares.  Up to symmetry, there are five types of half strips.  Except the three special ones whose shapes are independent of the choice of $\{P_n\}_{n\ge 1}$ (see Figures 3-5),  each of the other ones has the following common properties.
\begin{itemize}
\item  The two bottom angles are $\pi/4$ and $3 \pi /4$,
 \item  the width is  equal to one,
   \item the length of the bottom is equal to  $\sqrt 2$,
   \item   it is  bent to a right angle at some place.
   \end{itemize}

   \begin{figure}[!htpb]
  \setlength{\unitlength}{1mm}
  \centering
  \includegraphics[width=100mm]{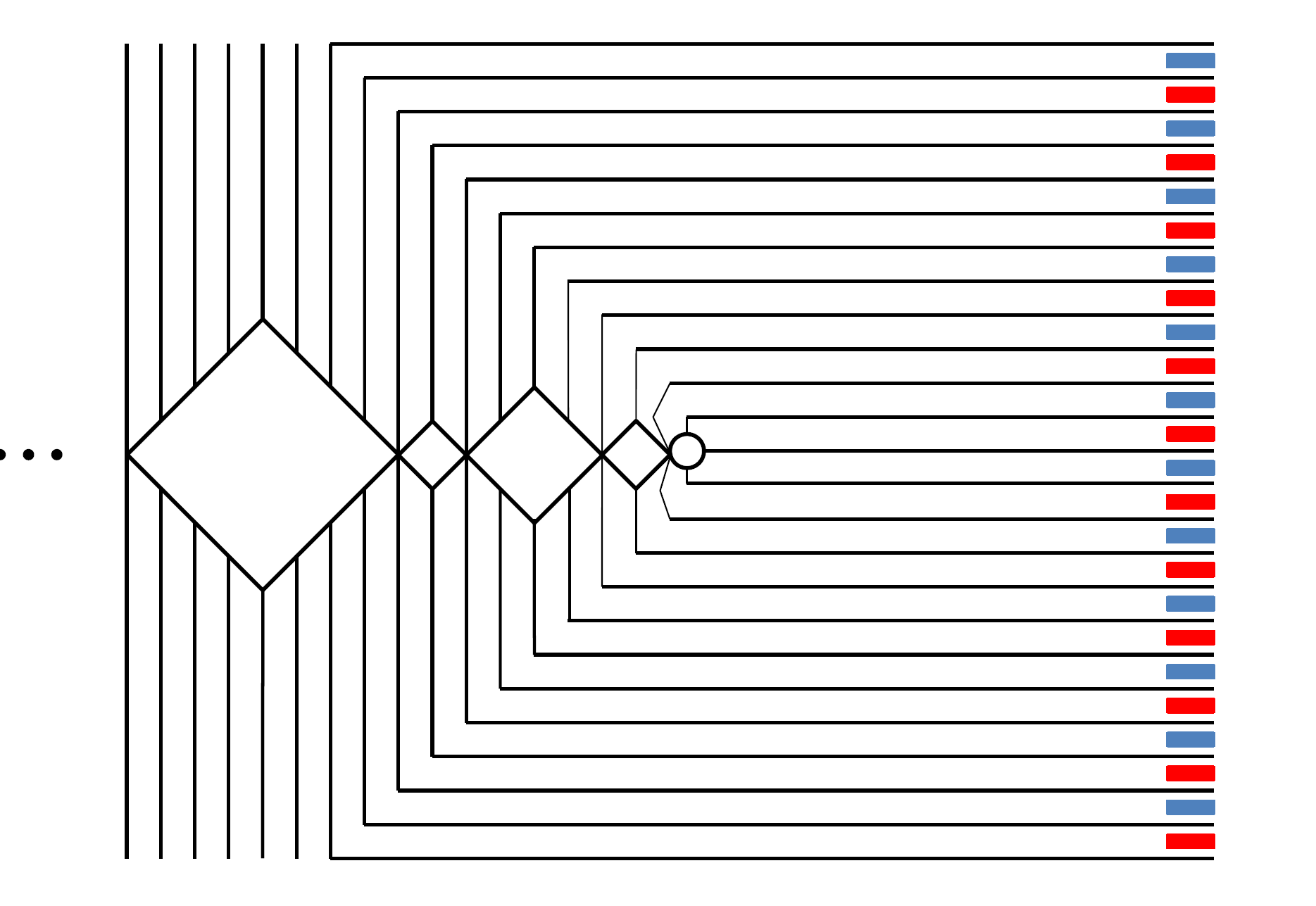}
  \caption{Constructing the quasi-regular map $F$}
  \label{Figure-2}
\end{figure}

We will construct $F$ by defining it  to be  $K$-quasiconformal on each of these half strips and squares   with   $K> 1$ being some constant
independent of the choice of $\{P_n\}$.  We then   glue these maps together along the boundary of their domains so that the global map satisfies the properties in the lemma.  First we will define a quasiconformal map $g$ on each of the half strip so that it maps each half strip   onto  the standard half strip
$S =  \{x + iy\:|\: -1 < x <1, y > 0\}$.   In this process the situations for types II and III are  similar with the type I and the situation for type V is similar with the type IV.  So we will only present the details of the construction
for the half strips of types I and IV.  It should be easy for the reader to supply the details for the other situations.

Let  $H$  be the half strip of the type I.   Let
 $AB$ denote the $1/4$-unit circle and $A'B'$ denote the bottom side of $S$. See Figure 3 for an illustration. Define $g$ on $AB$ so that it maps $AB$ proportionally onto $A'B'$.  Let $L$ and $R$ be the two components of $\partial H \setminus AB$ so that $L$ is the one with $B$ being the  end point and $R$  is  the one with $A$ being the end point.  Similarly, let $L'$ and $R'$ be the two corresponding  components of $\partial S \setminus A'B'$.
 For any $X \in L$, define $g(X) = X' \in L'$ such that the subarc $BX \subset L$ has the same length as the subarc $B'X' \subset L'$, and for $Y \in R$, define $g(Y) = Y' \in R'$ such that the subarc $AY \subset R$ has the same length as the subarc $A'Y'\subset R'$. In this way we have defined $g: \partial H \to \partial S$ so that it preserves the length on the two boundary sides $L$ and $R$. To define $g$ in the interior of $H$, let $C$ be the vertex of $H$ at the upper left corner and take $D \in L$ and $E \in R$ such that $|CD| = 2$ and $|AE|= 1$. Let $C' = g(C)$, $D' = g(D)$ and $E' = g(E)$. Define $g$ on $DE$ so that it maps $DE$ linearly onto $D'E'$.
  Now it is not difficult to extend  $g$ to  be  a quasiconformal homeomorphism from the domain $ABCDE$ to the pentagon $A'B'C'D'E'$.  For instance, one can  divide the two  domains into the same number of  star domains and then apply Theorem 6 of \cite{NS}.    Let $H_0$  and $S_0$  denote the remaining part of $H$  and $S$  respectively. Note that $g$ has been defined on $\partial H_0$ and is linear on each straight boundary piece of $H_0$.
   We can thus  extend $g$ to be a linear homeomorphism from $H_0$ to $S_0$.   It is clear the map $g: H \to S$ obtained in this way is a quasiconformal homeomorphism, and most importantly, it preserve the length on the two boundary sides of $H$.  The similar construction can be applied to the half strips of types II and III. See Figures 4 and 5 for an illustration. We leave the details to the reader.
\begin{figure}[!htpb]
  \setlength{\unitlength}{1mm}
  \centering
  \includegraphics[width=80mm]{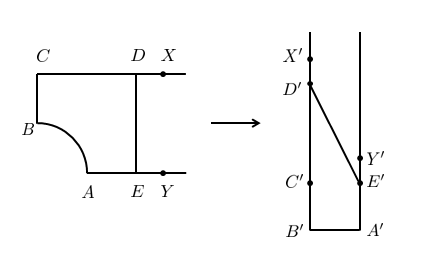}
  \caption{Define $g$ in the half strip of type I}
  \label{Figure-3}
\end{figure}

\begin{figure}[!htpb]
  \setlength{\unitlength}{1mm}
  \centering
  \includegraphics[width=80mm]{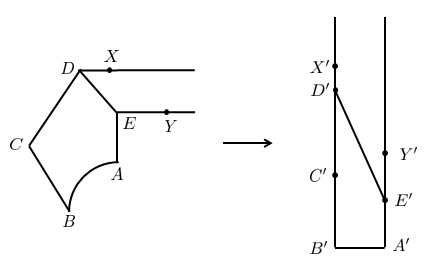}
  \caption{Defining $g$ in the  half strip of type II}
  \label{Figure-4}
\end{figure}

\begin{figure}[!htpb]
  \setlength{\unitlength}{1mm}
  \centering
  \includegraphics[width=80mm]{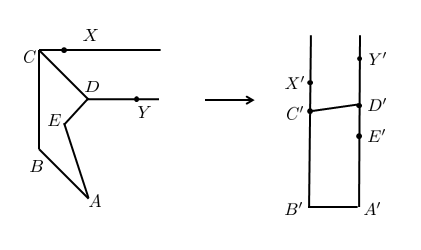}
  \caption{Define $g$ in the half strip of type III}
  \label{Figure-5}
\end{figure}

  Now let us define $g$ in a half strip $H$  of type IV. See Figure 6 for an illustration.   As before we first define $g$ on the bottom side $AB$ of $H$ to be linear on   $AB$ so that $g$ maps $AB$ to   $A'B'$.      Then  let $L$  and $R$ denote respectively the two components of $\partial H \setminus AB$,  one with $B$  being the end point    and    the other one with $A$ being the end point.    Similarly, Let $L'$ and $R'$ denote the corresponding components of $\partial S \setminus A'B'$.   For $X \in  L$, define $g(X) = X' \in L'$  so that the subarc $BX \subset L$  has the same length as the subarc $B'X' \subset L'$.  Similarly, for $Y \in R$, define $g(Y) = Y' \in R'$ so that the subarc $AY \subset R $  has the same length as the subarc $A'Y' \subset R'$.   In this way we have defined $g: \partial H \to \partial S$.

Let $D$ and $E$  denote the two corner points in $\partial H$ as indicated in Figure 6.  To define $g$ in the interior of $H$, we take a points  $C \in L$  such that $|BC| = |AE|$.   Let $C', D',  E' \in \partial S$ so that $C' = g(C)$, $D' = g(D)$ and $E' = g(E)$. Then $|CD| = |C'D'|$, $|BC| = |B'C'|$, $|AE| = |A'E'|$  and $g$ maps $CD$, $BC$ and $AE$
 linearly onto $C'D'$, $B'C'$ and $A'E'$, respectively. Define $g$ on $CE$ and $DE$ so that $g$ maps them linearly onto $C'E'$ and $D'E'$ respectively. Then $g$ can be extended linearly   to the interiors of the parallelogram $ABCE$ and the triangle $CED$ so that $g$ maps them onto the interiors of the  rectangle $A'B'C'E'$ and the triangle $C'E'D'$ respectively. The remaining part of $H$, say $H_0$,  is a half strip with $DE$ being the bottom side.  Correspondingly, the remaining part of $S$, say $S_0$, is also a half strip with $D'E'$ being the bottom side. Note that $g$ has been defined on $\partial H_0$ so that $g: \partial H_0 \to \partial S_0$ is linear on each straight boundary piece. Thus $g$ can be extended to the interior of $H_0$ so that $g$ maps $H_0$ linearly onto $S_0$.       Now we define $g: H \to S$  by gluing the maps on the three pieces along the boundary. It is clear that $g$ is a quasiconformal map. Since the quasiconformal constant of $g$ on each piece is some uniform constant, thus  the quasiconformal constant of $g$ on $H$ is a uniform constant.  Moreover, from the construction we see that $g$ preserve the length on the two boundary side of $\partial H$.

\begin{figure}[!htpb]
  \setlength{\unitlength}{1mm}
  \centering
  \includegraphics[width=80mm]{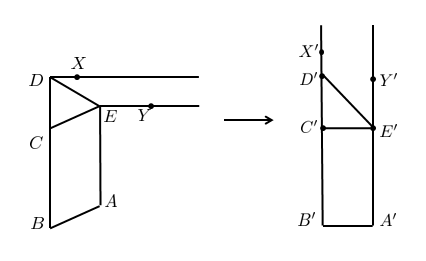}
  \caption{Define $g$ in the half strip of type IV}
  \label{Figure-6}
\end{figure}
\begin{figure}[!htpb]
  \setlength{\unitlength}{1mm}
  \centering
  \includegraphics[width=80mm]{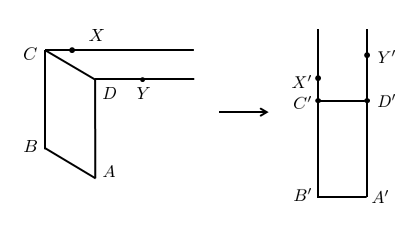}
  \caption{Define $g$ in the half strip of type V}
  \label{Figure-7}
\end{figure}
For a half strip $H$ of type V,  the method above can be applied in a similar way to get  a $K$-quasiconformal homeomorphism  $g: H \to S$  such that   (1)   $K >1$ is some uniform constant,  and (2)    $g$ is linear on the bottom side and  preserves the length on the two boundary sides of $H$. See Figure 7 for an illustration. We leave the details to the reader.

Now let us define $F$  on the half strips. Let $H_{+}$ and $H_{-}$  denote respectively the upper and lower components of
$\Bbb C  \setminus (-\infty, -1] \cup \overline{\Bbb D} \cup [1, +\infty)$.   Define $\sigma: S \to H_+$ by $\sigma(z) = e^{i\pi(1-z)/2}$ and $\tau: S \to H_-$ by $\tau(z) = -e^{i\pi(1-z)/2}$.  It  remains to verify that whenever a point belong to the intersection of the boundaries of two half strips, the two maps defined on the two half strips must coincide on this point. To see this, we may assume that $z$ belong to the $L$ part of the boundary of some red half strip (that is, marked with red rectangle) and belongs to the $R$ part of the boundary of some blue half strip (that is, marked with blue rectangle). Let $g_r$, $F_r$, and $g_b$, $F_b$  denote respectively the maps $g$ and $F$  defined on the red and blue half strips. Since both $g_r$ and $g_b$ preserve the length on the two boundary side of the half strips, $g_r(z)\in L' $ and $g_b(z)\in R' $ have the same imaginary part. By the definition of $\sigma$, it follows that $$ F_r(z) = \sigma (g_r(z)) = -\sigma(g_b(z)) = \tau (g_b(z)) = F_b(z).$$

\begin{figure}[!htpb]
  \setlength{\unitlength}{1mm}
  \centering
  \includegraphics[width=100mm]{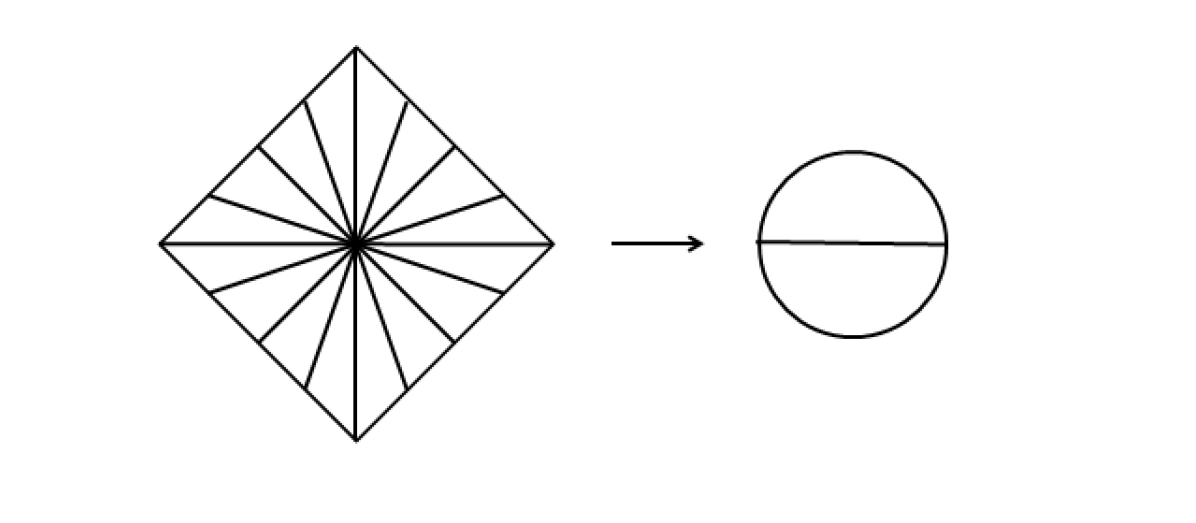}
  \caption{$\theta \to \xi(\theta)$, $r \to \frac{r^d}{R(\theta)^d}e^{\xi(\theta)}$  }
  \label{Figure-7}
\end{figure}

It remains to define $F$ in the interior of the squares. Suppose that  the diameter of $P_n$ is  $d =  2m$ for some integer $m \ge 1$. Note that we
have defined $F$ on the boundary of $P_n$ in the above. From the construction, $F$ maps $\partial P_n$ proportionally onto $\Bbb T$ and   $F:  \partial P_n \to \Bbb T$ is a covering of degree $d$.  Suppose $\omega$ is the center of $P_n$. For $0\le \theta \le 2 \pi$, let $R(\theta) > 0$  and $0\le \xi(\theta) \le 2 \pi$  be the numbers such that $\omega + R(\theta) e^{i \theta} \in \partial P_n$    and
$$
F(\omega + R(\theta) e^{i \theta}) =   e^{i\xi(\theta)}.
$$   Since $F$ maps $\partial P_n$ proportionally onto $\Bbb T$, it follows that $\xi'(\theta)$ exists except the fours angles corresponding to the four vertices  of $P_n$, and moreover,  $1/C  <\xi'(\theta_1)/\xi'(\theta_2) < C$ for some uniform $C > 1$. Since $F: \partial P_n \to \Bbb T$ is a covering of degree $d$ we get  $\xi'(\theta)/d > 0$ and is uniformly bounded away from zero and $1$.
For $ 0 \le r  \le R(\theta)$, define
$$F(\omega + r e^{i \theta}) =  \frac{r^d}{R(\theta)^d}  e^{i\xi(\theta)} .$$
We  claim that  $F$ is $K$-quasi-regular for some universal $K > 1$ in $P_n$.   For the convenience of the computation we adopt the log coordinate. Consider the map $$\zeta \to \log F (\omega +e^\zeta) = u + iv$$ where $\zeta = x + iy$.  Then $x = \log r$ and $y = \theta$.  We thus get   $$u(x, y)= dx   - d\log R(y) \hbox{  and  } v (x, y) = \xi(y).$$
Let $h = u + iv$. Let us compute $h_{\bar{z}}$ and $h_{z}$.
 For $A, B >0$ we denote $A=  O^*(B)$  if $A/B$ is  uniformly bounded away from zero and $1$.    We use $O(1)$ to denote a number whose absolute value is bounded by some universal constant.  By a simple computation we have
  $$
  h_x = d \hbox{  and  } h_y = -d \frac{R'(y)}{R(y)} + i \xi'(y).
  $$   Note that  $R'(y) = O(1)$,  $R(y) = O^*(d)$ and $\xi'(y) = O^*( d)$. From $h_z = \frac{1}{2}(h_x - ih_y)$ and $h_{\bar{z}} = \frac{1}{2}(h_x + ih_y)$ we get
$$
h_{\bar{z}} = \frac{1}{2}(d - O^*(d)) + i\lambda(z)
$$
and
$$
h_{z} = \frac{1}{2}(d + O^*(d)) - i\lambda(z)
$$ where $\lambda(z) =  O(1)$.  This implies that $|h_{\bar{z}}/h_z| <k < 1$  for some  uniform   $0< k < 1$  independent of $d$.

Now from the construction
 the first four assertions of the main lemma hold.  To prove the last one,  we  first take a smallest closed disk centered at the origin which contains  the compact set.  Note that the bound of $F$ on the closed disk
is essentially determined by the length of the longest half strip contained in it.   But this  is  then  bounded above by some constant depending only on the diameter of the disk.  Since the diameter of the disk depends on the compact set, the last assertion follows.  The proof of the Main Lemma is completed.

\vspace{0.5cm}

$\bold{Acknowledgements.}$  We would like to thank D. Sixsmith and L. Rempe for their detailed comments on an early version of this work. Further thanks are due to the anonymous referees for their   suggestions which greatly improve the paper.  The second author is partially supported by NSFC(11171144, 11325104).

\end{document}